\newtheorem{theorem}{Theorem}
\newtheorem{question}{Question}
\newcommand{\eps}{\varepsilon}
\newcommand{\NN}{\mathbb{N}} 
\newcommand{\QQ}{\mathbb{Q}} 
\newcommand{\RR}{\mathbb{R}} 
\def\J{\mathcal{J}}
\def\K{\mathcal{K}}
\newcommand{\len}{{\rm len}}
\newcommand{\ie}{{i.e.}}
\newcommand{\eg}{{e.g.}}
\newcommand{\later}[1]{{}}
\newcommand{\old}[1]{{}}
\long\def\ignore#1{}
\title{\textsc{Problems on Track Runners}\thanks{A preliminary
    version of this paper appeared in the {\em Proceedings of the 29th
      Canadian Conference on Computational Geometry}, Ottawa, ON,
    Canada, July 2017.
}}
\author{%
Adrian Dumitrescu\\
\small Department of Computer Science\\[-0.8ex]
\small University of Wisconsin--Milwaukee\\[-0.8ex]
\small Milwaukee, WI, USA\\
\small\tt dumitres@uwm.edu\\
\and
Csaba D. T\'oth\\
\small Department of Mathematics\\[-0.8ex]
\small California State University, Northridge\\[-0.8ex]
\small Los Angeles, CA, USA\\
\small\tt cdtoth@acm.org
}
\begin{document}

\maketitle

\begin{abstract}
  Consider the circle $C$ of length 1 and a circular arc $A$ of length $\ell\in (0,1)$.
  It is shown that there exists $k=k(\ell) \in \NN$, and a schedule for $k$
  runners along the circle with $k$ constant but distinct positive speeds so that at
  any time $t \geq 0$, at least one of the $k$ runners is \emph{not} in $A$.

  On the other hand, we show the following.
  Assume that $k$ runners $1,2,\ldots,k$, with constant rationally independent
  (thus distinct) speeds $\xi_1,\xi_2,\ldots,\xi_k$,
  run clockwise along a circle of length $1$, starting from
  arbitrary points. For every circular arc $A\subset C$
  and for every $T>0$, there exists $t>T$  such that
  all runners are in $A$ at time $t$.

  Several other problems of a similar nature are investigated.

\medskip
\noindent\textbf{\small Keywords}:
Kronecker's theorem,
rational independence,
track runners,
multi-agent patrolling,
idle time.
\end{abstract}

\section{Introduction} \label{sec:intro}

In the classic \emph{lonely runner conjecture},
introduced by Wills~\cite{Wi67} and Cusick~\cite{Cu73},
$k$ runners run clockwise along a circle of length $1$,
starting from the same point at time $t=0$.
They have distinct but constant speeds. A runner is called \emph{lonely} when he/she is
at distance of at least $\frac{1}{k}$ from all other runners (along the circle).
The conjecture asserts that each runner $i$ is lonely at some time
$t_i\in (0,\infty)$. The conjecture has only been confirmed for up to
$k=7$ runners~\cite{BS08,BHK01}. A recent survey~\cite{DT14} lists
a few other related problems.

Recently, some problems with similar flavor have appeared in the context of
\emph{multi-agent patrolling}, in some one-dimensional
scenarios~\cite{Ch04,CGKK11,DGT14,KK15,KS15}.
Suppose that $k$ mobile agents with (possibly distinct) maximum speeds
$v_i$ ($i=1,\ldots,k$) are in charge of \emph{patrolling} a closed or open
fence (modeled by a circle or a line segment). The movement of the agents
over the time interval $[0,\infty)$ is described by a \emph{patrolling schedule}
(or \emph{guarding schedule}), where the speed of the $i$th agent, ($i=1,\ldots,k$),
may vary between zero and its maximum value $v_i$ in any of the two directions
along the fence. Given a closed or open fence of length $\ell$ and maximum speeds
$v_1,\ldots,v_k>0$ of $k$ agents, the goal is to find a patrolling schedule that minimizes
the \emph{idle time}, defined as the longest time interval in $[0,\infty)$
during which some point along the fence remains unvisited, taken over all points.
Several basic problems are open, such as the following:
Given $v_1,\ldots,v_k>0$ and $\ell,\tau>0$,
can one even decide whether $k$ agents with these maximum speeds
can ensure an idle time at most $\tau$ when patrolling a segment of length $\ell$?

This paper is devoted to several questions on track runners.
As customary, we consider the unidirectional circular track;
for convenience we assume all runners run clockwise.
In the spirit of the lonely runner conjecture, we posed the following
question in~\cite{DT14} (slightly rephrased here):

\begin{question}\label{q:original}
Assume that $k$ runners $1,2,\ldots,k$, with constant but distinct speeds,
run clockwise along a circle of length $1$, starting from arbitrary points.
Assume also that a certain half of the circular track (or any other fixed
circular arc $A$) is in the shade at all times. Does there always exist a time
when all runners are in the shade along the track?
\end{question}

Here we answer Question~\ref{q:original} in the negative: the statement does not hold
even if the shaded arc almost covers the entire track,
\eg, has length $0.999$, provided that $k$ is large enough.

\paragraph{Notation and terminology.}
We parameterize a circle of length $\ell$ by the interval $[0,\ell]$, where
the endpoints of the interval $[0,\ell]$ are identified.
A \emph{unit} circle is a circle of unit length $C=[0,1) \mod 1$.
A \emph{schedule} of $k$ agents consists of $k$ functions
$f_i:[0,\infty] \rightarrow [0,\ell]$, for
$i=1,\ldots , k$, where $f_i(t) \mod \ell$ is the position of agent
$i$ at time $t$. Each function $f_i$ is continuous,
piecewise differentiable, and its derivative (speed) is bounded by $|f_i'|\leq v_i$.
The $k$ agents have \emph{constant speeds} $v_1,\ldots, v_k$,
with starting points $\beta_1,\ldots, \beta_k$
when $f_i(t)=v_i t+\beta_i \mod \ell$ for all $i=1,\ldots , k$.
A schedule is called \emph{periodic} with {period} $T>0$
if $f_i(t)=f_i(t+T) \mod \ell$ for all $i=1,\ldots , k$ and $t\geq 0$.
$H_n= \sum_{i=1}^n 1/i$ denotes the $n$th \emph{harmonic number}; and $H_0=0$.
If $I$ is an interval, $|I|$ denotes its length.

If $I=[a,b]$ is an interval, $x \in \RR$, and $y>0$, then $I+x$ is the interval $[a+x,b+x]$
and $yI$ is the interval $[ay,by]$; this notation is used in Section~\ref{sec:algo}.

\section{Track runners in the shade} \label{sec:shade}

We first show that the answer to the question posed in~\cite{DT14} is negative in general:

\begin{theorem}\label{thm:no-shade}
  Consider a circle $C$ of unit length and a circular arc $A\subset C$ of length
  $\ell=|A|$, where $\ell\in (0,1)$. Then there exists $k=k(\ell) \in \NN$, and a schedule for $k$
  runners with $k$ distinct constant speeds and suitable starting points,
  so that at any time $t \geq 0$, at least one of the $k$ runners is in
  the complement $C \setminus A$.
\end{theorem}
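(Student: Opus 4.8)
The plan is to pass to the complementary arc $B := C \setminus A$, which has length $\beta := 1 - \ell > 0$; the goal then becomes to exhibit a periodic schedule in which, at every time $t$, at least one runner lies in $B$. A tempting first instinct — take runners with rationally independent speeds — is precisely the wrong one: by Weyl equidistribution the position vector $(v_1 t,\ldots,v_k t) \bmod 1$ would be equidistributed in $[0,1)^k$, so the ``all runners in $A$'' event would occupy a positive fraction $\ell^k$ of the time and, in particular, occur. Instead I would use \emph{commensurable} speeds: let runner $i$ have constant speed $v_i = i$ for $i = 1,\ldots,k$, with $f_i(t) = it + p_i$ for a phase $p_i$ to be chosen. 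These speeds are distinct, and the schedule is periodic with common period $T = 1$, so it suffices to guarantee the covering condition on a single period $[0,1]$.

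Next I would record the elementary fact underlying the construction: for runner $i$ (speed $i$), the set of times $t \in [0,1)$ with $it + p_i \bmod 1 \in B$ is a disjoint union of $i$ intervals, each of length $\beta/i$, equally spaced with gap $1/i$; moreover, varying the phase $p_i$ translates this whole family, so one of these intervals can be made to start at any prescribed point of $[0,1)$.

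With this in hand, the phases are chosen greedily. Take runner $1$ with $p_1 = 0$, so it is in $B$ exactly for $t \in [0,\beta)$; set $c_1 = \beta$. Inductively, if runners $1,\ldots,i-1$ together cover $[0,c_{i-1})$, choose $p_i$ so that one of runner $i$'s in-$B$ intervals begins at $c_{i-1}$; this extends the covered set to $[0,c_i)$ with $c_i = c_{i-1} + \beta/i$ (the remaining in-$B$ intervals of runner $i$ only help). Hence after $k$ runners the covered set contains $[0,\beta H_k)$. Since $H_k \to \infty$, we may pick $k = k(\ell)$ to be any integer with $H_k > 1/(1-\ell)$, so that $\beta H_k > 1$; then — placing the greedily chosen intervals so that consecutive ones overlap by an arbitrarily small amount, which simultaneously disposes of the open/closed endpoints of $B$ — the union of the in-$B$ time sets of runners $1,\ldots,k$ is all of $[0,1]$. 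By periodicity this holds for every $t \ge 0$, i.e., at every moment at least one runner lies in $C \setminus A$, which is the claim.

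I expect the only genuine subtlety to be the endpoint bookkeeping (whether $A$ is open or closed, and ensuring the greedily placed intervals overlap slightly so that no single instant is left uncovered); the slack $\beta H_k > 1$ leaves ample room for this, and everything else is routine. It is worth remarking that the bound obtained is far from tight — it gives $k$ roughly $e^{1/(1-\ell)}$, whereas a trivial measure argument shows $k \ge 1/(1-\ell)$ is necessary — but for the mere existence asserted by the theorem, the divergence of the harmonic series is all that is required.
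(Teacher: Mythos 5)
Your proposal is correct and is essentially the paper's own argument: speeds $v_i=i$ giving period $1$, phases chosen sequentially so that runner $i$ occupies the complementary arc during a time interval of length $(1-\ell)/i$ abutting the previous one, and divergence of the harmonic series to cover $[0,1]$ once $H_k\geq 1/(1-\ell)$. The only cosmetic difference is that the paper uses closed time intervals $[(1-\ell)H_{i-1},(1-\ell)H_i]$ sharing endpoints, so no overlap bookkeeping is needed.
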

\begin{proof}
Set $v_i=i$ as the speed of runner $i$, for $i=1,\ldots,k$, where $k=k(\ell)\in \NN$
will be specified later. Assume, as we may, that $C \setminus A=[0,a]$, where $a=1-\ell$.
Let $t_0=0$. Since the speed of each runner is an integer
(and thereby multiple of the circle length $\len(C)=1$),
the resulting schedule is periodic and the period is $1$.
To ensure that at any time $t \geq 0$, at least one runner is in $[0,a]$,
it suffices to ensure this
\emph{covering condition} on the time interval $[0,1)$, \ie, one period of the schedule.
All runners start at time $t=0$; however, it is convenient to specify their schedule
with their positions at a later time.

Runner $1$ starts at point $0$ at time $0$; at time $a$, its position is at $a$ (exiting $[0,a]$).
Runner $2$ is at point $0$ at time $a$; at time $a+a/2$, its position is at $a$  (exiting $[0,a]$).
Runner $3$ is at point $0$ at time $a+a/2$; at time $a+a/2+a/3$, its position is at $a$
(exiting $[0,a]$). Subsequent runners are scheduled according to this pattern.
For $i=1,\ldots,k$, runner $i$ is at point $0$ at time $a H_{i-1}$;
at time $a H_i$, its position is at $a$ (exiting $[0,a]$).
The schedules are given by the functions $f_i(t) = i t - i a H_{i-1}$ for $i=1,\ldots,k$.

The construction ensures that runner $i$ is in $[0,a]$ during the time interval
$[aH_{i-1},aH_i]$, for $i=1,\ldots,k$. We choose $k\in \mathbb{N}$ such that
\begin{equation}\label{eq:hn}
\bigcup_{i=1}^k [aH_{i-1},aH_i] \supseteq [0,1),
\end{equation}
and then at least one of the $k$ runners will be in $[0,a]$
at any time $t\geq 0$, as required.
Condition \eqref{eq:hn} is equivalent to $a H_k \geq 1$, or $H_k \geq 1/a$.
Since $\ln{k} \leq H_k$, it suffices to have $\ln{k} \geq  1/a$, or
$k \geq \exp(1/a)$, and the theorem is proved.
\end{proof}

The result extends to any finite number of circular arcs $A_1,A_2,\ldots ,A_m\subset C$.
Stating the results for the complements $B_i=C \setminus A_i$, for $i=1,2,\ldots, m$,
we can schedule $k$ mobile runners with distinct integer speeds so that at any time $t\geq 0$,
each interval $B_i$ contains at least one of the runners.

\begin{theorem}\label{thm:many-no-shade}
  Consider a circle $C$ of unit length and $m$ circular arcs $B_1,B_2,\ldots , B_m\subset C$,
  for some $m \in \NN$.
  Then there exists $k \in \NN$, and a schedule for $k$ runners with $k$ distinct
  constant speeds and suitable starting points, so that at any time $t \geq 0$,
  each of the arcs $B_1,B_2,\ldots, B_m$ contains at least one of the $k$ runners.
\end{theorem}
\begin{proof}
In the proof of Theorem~\ref{thm:no-shade}, we constructed a schedule of $k(\ell)$ runners
with speeds $1,2,\ldots, k(\ell)$. Note, however, that for any $s \in \NN$,
we could have used runners of speeds $s+1,s+2,\ldots , s+k(\ell,s)$, such that
\begin{equation}\label{eq:s}
\bigcup_{i=s+1}^{s+k(\ell,s)} [aH_{i-1},aH_i] \supseteq [0,1).
\end{equation}
Indeed, for every $s \in \NN$ there exists $k(\ell,s) \in \NN$ satisfying~\eqref{eq:s},
since $\lim_{i\rightarrow \infty} H_i=\infty$.

For arc $B_1=[0,a_1]$, there exists $k_1=k(|B_1|,0)\in \mathbb{N}$ and a schedule
for $k_1$ runners with speeds $1,2,\ldots, k_1$ such that at least one of these runners
is in $B_1$ at any time $t\geq 0$.
For arc $B_2$, there exists $k_2=k(|B_2|,k_1)\in \mathbb{N}$ and a schedule for $k_2$
runners with speeds $k_1+1,k_2+2,\ldots, k_1+k_2$ such that at least one of them is in $B_2$
at any time $t\geq 0$.
In general, if the first $i-1$ intervals are covered,
let $s_i=\sum_{j=1}^{i-1}k_i$. Then there exists $k_i=k(|B_i|,s_i)\in \mathbb{N}$ and
a schedule for $k_i$ runners with speeds $s_i+1,s_i+2,\ldots, s_i+k_i$ such that
at least one of them is in arc $B_i$ at any time $t\geq 0$, as required.
\end{proof}

\paragraph{Positive cases.}
Now that we have seen that the answer to Question~\ref{q:original} is negative in general,
it is however interesting to exhibit some scenarios (\ie, conditions)
under which the answer is positive.

A set of real numbers $\xi_1,\xi_2,\ldots,\xi_k$ is said to be \emph{rationally independent}
if no linear relation
$$ c_1 \xi_1 + c_2 \xi_2 + \cdots +  c_k \xi_k = 0, $$
with integer coefficients, not all of which are zero, holds.
In particular, if $\xi_1,\xi_2,\ldots,\xi_k$ are rationally independent,
then they are pairwise distinct.
Recall now Kronecker's theorem; see, \eg, \cite[Theorem 444, p.~382]{HW79}.
\begin{theorem} {\rm (Kronecker, 1884)} \label{thm:K}
If $\xi_1,\xi_2,\ldots,\xi_k\in \mathbb{R}$ are rationally independent,
$\alpha_1,\alpha_2,\ldots,\alpha_k\in \mathbb{R}$ are arbitrary,
and $T$ and $\eps$ are positive reals, then there is a real number $t>T$,
and integers $p_1,p_2,\ldots,p_k$, such that
$$ | t \xi_m -p_m - \alpha_m | < \eps \ \ \ (m=1,2,\ldots,k). $$
\end{theorem}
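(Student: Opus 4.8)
The plan is to recast Kronecker's statement as a density assertion on a torus and then settle it by a Weyl-type equidistribution argument. Put $\mathbb{T}^k=\mathbb{R}^k/\mathbb{Z}^k$, let $\xi=(\xi_1,\dots,\xi_k)$, and let $\gamma\colon[0,\infty)\to\mathbb{T}^k$ be the curve $\gamma(t)=t\xi \bmod 1$. The desired conclusion --- that there are $t>T$ and integers $p_1,\dots,p_k$ with $|t\xi_m-p_m-\alpha_m|\le\eps$ for every $m$ --- says precisely that $\gamma(t)$ lies within $\eps$ of the point $\alpha:=(\alpha_1,\dots,\alpha_k)\bmod 1$ in the $\ell_\infty$ metric on $\mathbb{T}^k$. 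So it is enough to show that, for every $T>0$, the tail $\{\gamma(t):t>T\}$ is dense in $\mathbb{T}^k$. (Because $t$ ranges over all reals, the object in play is a one-parameter subgroup --- a line winding around the torus --- rather than a cyclic subgroup, and that is exactly why only $\xi_1,\dots,\xi_k$, and not also $1$, need to be rationally independent.)

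First I would invoke the continuous form of Weyl's criterion: the curve $\gamma$ is equidistributed in $\mathbb{T}^k$, and hence its tail is dense, provided that for every nonzero $n=(n_1,\dots,n_k)\in\mathbb{Z}^k$,
\[
\lim_{S\to\infty}\frac1S\int_0^S e^{2\pi i\langle n,\,\gamma(t)\rangle}\,dt=0 .
\]
Now $\langle n,\gamma(t)\rangle=\lambda t$ with $\lambda=\sum_{m=1}^k n_m\xi_m$, and rational independence of the $\xi_m$ is exactly the statement that $\lambda\neq 0$ for every such $n$ --- this is the one and only place the hypothesis enters. Hence
\[
\Bigl|\frac1S\int_0^S e^{2\pi i\lambda t}\,dt\Bigr|=\Bigl|\frac{e^{2\pi i\lambda S}-1}{2\pi i\lambda S}\Bigr|\le\frac{1}{\pi|\lambda|\,S}\longrightarrow 0 ,
\]
so the criterion holds. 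Equidistribution then gives, for the open $\eps$-box $U$ about $\alpha$, that $\{t\ge 0:\gamma(t)\in U\}$ has positive lower density, so it is unbounded and in particular meets $(T,\infty)$; any $t$ in this intersection supplies the required integers $p_1,\dots,p_k$.

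The only ingredient here that is not a direct computation is Weyl's criterion itself, i.e.\ the passage from vanishing of those exponential averages to genuine equidistribution (and to the uniform-in-$t>T$ conclusion); this is the step I expect to require actual work, though it is routine. I would carry it out in the usual way: sandwich the indicator $\mathbf{1}_U$ between two trigonometric polynomials on $\mathbb{T}^k$ via Fej\'er kernels, apply the hypothesis to each character appearing in them, and squeeze the time-averages of $\mathbf{1}_U$ between two quantities converging to $\vol(U)>0$. (An alternative that bypasses equidistribution: the closure $H$ of $\{\gamma(t):t\in\mathbb{R}\}$ is a closed subgroup of $\mathbb{T}^k$; were $H$ proper it would lie in $\ker\chi_n$ for some nontrivial character $\chi_n$, but $\chi_n\circ\gamma(t)=e^{2\pi i\lambda t}$ with $\lambda\neq 0$ is nonconstant --- a contradiction --- so $H=\mathbb{T}^k$, and since a closed subsemigroup of a compact group is a subgroup, the same reasoning applied to $\{\gamma(t):t>T\}$ upgrades this to density of the tail.)
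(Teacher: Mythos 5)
Your argument is correct, but note that the paper does not actually prove this statement: Kronecker's theorem is quoted verbatim from Hardy and Wright \cite[Theorem 444]{HW79} and used as a black box, so there is no in-paper proof to match. What you give is the standard dynamical/harmonic-analytic proof: reformulate the claim as density (indeed equidistribution) of the line $t\mapsto t\xi \bmod 1$ in $\mathbb{T}^k$, verify the continuous Weyl criterion by the explicit computation $\bigl|\frac1S\int_0^S e^{2\pi i\lambda t}\,dt\bigr|\le \frac{1}{\pi|\lambda| S}$, and observe that rational independence of $\xi_1,\dots,\xi_k$ enters exactly once, in guaranteeing $\lambda=\sum n_m\xi_m\neq 0$ for $n\neq 0$. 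You are also right on the one genuinely delicate point, namely that the continuous-time version needs only independence of $\xi_1,\dots,\xi_k$ (not of $1,\xi_1,\dots,\xi_k$ as in the discrete version), and your handling of the tail $t>T$ is sound either via positive density of the hitting-time set or via the closed-subsemigroup argument. By contrast, the proofs in Hardy and Wright are an elementary induction on $k$ (Lettenmeyer/Estermann) and Bohr's analytic argument; your equidistribution route is less elementary in that it imports Weyl's criterion (whose proof via Fej\'er-kernel approximation you correctly flag as the only nontrivial ingredient), but it buys a stronger conclusion --- the set of good times $t$ has positive density, not merely nonempty intersection with $(T,\infty)$ --- and makes the role of the independence hypothesis completely transparent.
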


As a corollary, we obtain the following result.

\begin{theorem} \label{thm:shade1}
  Assume that $k$ runners $1,2,\ldots,k$, with constant rationally independent
  (thus distinct) speeds $\xi_1,\xi_2,\ldots,\xi_k$,
  run clockwise along a circle of length $1$, starting from
  arbitrary points. For every circular arc $A\subset C$
  and for every $T>0$, there exists $t>T$  such that
  all runners are in $A$ at time $t$.
\end{theorem}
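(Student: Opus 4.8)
The plan is to derive Theorem~\ref{thm:shade1} as a direct corollary of Kronecker's theorem (Theorem~\ref{thm:K}). Let the $i$th runner start at position $\beta_i \in [0,1)$ at time $0$, so that its position at time $t$ is $f_i(t) = \beta_i + t\xi_i \bmod 1$. Since the arc $A \subset C$ is a nondegenerate circular arc, it contains an open subarc, so there is some point $c \in [0,1)$ and some $\eps > 0$ such that the interval $(c-\eps, c+\eps) \bmod 1$ is entirely contained in $A$. The goal is to find $t > T$ at which $f_i(t)$ lies within distance $\eps$ of $c$ (mod $1$) simultaneously for all $i = 1, \ldots, k$; then every runner is in $A$ at time $t$.

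First I would set $\alpha_i = c - \beta_i$ for each $i$, and apply Kronecker's theorem with the given rationally independent reals $\xi_1, \ldots, \xi_k$, the targets $\alpha_1, \ldots, \alpha_k$, the given threshold $T$, and the chosen $\eps$. This yields a real $t > T$ and integers $p_1, \ldots, p_k$ with $|t\xi_i - p_i - \alpha_i| \le \eps$ for all $i$. Rearranging, $|(\beta_i + t\xi_i) - (c + p_i)| \le \eps$, which says precisely that $f_i(t) = \beta_i + t\xi_i$ is within distance $\eps$ of $c$ modulo $1$. Hence $f_i(t) \bmod 1 \in (c-\eps, c+\eps) \bmod 1 \subseteq A$ for every $i$, so all $k$ runners lie in $A$ at time $t > T$, as claimed.

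There is essentially no hard step here; the only point requiring a little care is the translation between the ``runners on a circle'' language and the hypotheses of Kronecker's theorem, namely observing that (i) rational independence of the speeds is exactly the hypothesis Kronecker needs, (ii) the starting positions $\beta_i$ can be absorbed into the target phases $\alpha_i$, and (iii) a genuine arc (as opposed to a single point) is what lets us tolerate the $\eps$-error that Kronecker's theorem unavoidably introduces — one cannot in general hit an exact target, but one can get $\eps$-close, and that suffices for an arc of positive length. One should also note that $\eps$ may be taken strictly less than half the length of $A$ so that the $\eps$-neighborhood of $c$ is a proper subarc, avoiding any wraparound subtlety. With these observations in place the proof is just a direct invocation of Theorem~\ref{thm:K}.
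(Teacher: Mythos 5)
Your proposal is correct and is essentially the paper's own argument: the paper also reduces to $A=[0,a]$, targets the midpoint of the arc (your $c$ is their $a/2$), absorbs the starting positions into the phases $\alpha_i$ (they take $\alpha_i = a/2+1-\beta_i$), chooses $\eps=a/3<a/2$ to stay inside the arc, and invokes Kronecker's theorem directly. No substantive difference.
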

\begin{proof}
  Assume, as we may, that $A=[0,a]$, for some $a \in (0,1)$.
  Let $0 \leq \beta_i < 1$, be the start position of runner $i$, for $i=1,2,\ldots,k$.
  Set $\alpha_i = a/2 + 1- \beta_i$, for $i=1,2,\ldots,k$,
  set $\eps=a/3$, and employ Theorem~\ref{thm:K} to finish the proof.
\end{proof}

\paragraph{Remark.}
It is interesting to note that Theorem~\ref{thm:no-shade} gives a negative answer
to Question~\ref{q:original} regardless of how long the shaded arc is,
while Theorem~\ref{thm:shade1} gives a positive answer regardless
of how short the shaded arc is \emph{and} for how far in the future one desires.

Observe that if $\xi_1,\xi_2,\ldots,\xi_k$ are rationally independent reals,
then at least one $\xi_i$ must be irrational
(in fact, all but at most one $\xi_i$ must be irrational).
To obtain the conclusion of Theorem~\ref{thm:shade1} neither the condition that the
speeds $\xi_1,\xi_2,\ldots,\xi_k$ are rationally independent, nor the condition that at least one
$\xi_i$ is irrational is necessary.
We next show that one can incrementally choose rational speeds for the $k$ runners
such that all are in the shade infinitely many times (regardless of their starting points).

\begin{theorem} \label{thm:shade2}
For every circular arc $A\subset C$, $k\in \mathbb{N}$, and starting positions
$\beta_1,\beta_2,\ldots , \beta_k\in C$, there exist distinct rational speeds
$v_1,v_2,\ldots,v_k>0$, such that the following holds.
If $k$ runners run clockwise with constant speeds $v_1,v_2,\ldots , v_k$
starting from $\beta_1,\beta_2,\ldots , \beta_k$, then for every $T>0$,
there exists $t>T$ such that all runners are in $A$ at time $t$.
\end{theorem}
\begin{proof}
  Assume, as we may, that $[0,a]\subseteq A$, for some rational $a \in (0,1)$.
  Let $\beta_1,\beta_2,\ldots,\beta_k$ be the starting points of the runners,
  where $0 \leq \beta_i < 1$, for $i=1,2,\ldots,k$.
  We proceed by induction on the number of runners $k$, and
  with a stronger induction hypothesis extending to every arc $A$.
  The base case $k=1$ is satisfied by setting $v_1=1$ for any arc $A$.
  The subsequent speeds will be set to increasing values, so that
  $v_1 < v_2 < \cdots <v_k$.

 For the induction step, assume that the statement holds for runners $1,2,\ldots,k-1$,
 the arc $A'=[0,a/2]$ and $T$, and we need to prove it for runners $1,2,\ldots,k$,
 the arc $A=[0,a]$ and $T$.
 By the induction hypothesis, there exists $t>T$ so that
 runners $1,2,\ldots,k-1$, are in $A'$ at time $t$.
 Set $v_k =\frac{2}{a} v_{k-1}$; since $a,v_{k-1} \in \QQ$, we have $v_k \in \QQ$.
 Observe that runner $k$ will enter the arc $A$ at point $0$ before any of the first $k-1$
 runners exits $A$ at point $a$, regardless of his or her starting point.
 Hence all $k$ runners will be in $A$ at some time in the
 interval $[t,t+1/v_k]$, completing the induction step, and thereby the proof of the theorem.
\end{proof}

In Theorem~\ref{thm:shade2}, the speeds $v_1,v_2,\ldots , v_k$
ensure that $k$ runners are in a given circular arc $A\subset C$ infinitely
many times. Different intervals may require different speeds (based on the
relative position of $A$ and the $k$ starting positions). The next theorem
shows that, in fact, the same $k$ speeds ensure this property for all
circular arcs of a given length $a>0$. Its proof is very similar to that
of Theorem~\ref{thm:shade2}; for clarity we include both proofs.

\begin{theorem} \label{thm:shade3}
For every $a\in (0,1]$, $k\in \mathbb{N}$, and starting positions
$\beta_1,\beta_2,\ldots , \beta_k\in C$, there exist distinct rational speeds
$v_1,v_2,\ldots,v_k>0$, such that the following holds.
If $k$ runners run clockwise with constant speeds $v_1,v_2,\ldots , v_k$
starting from $\beta_1,\beta_2,\ldots , \beta_k$, then for every $T>0$
and every circular arc $A\subset C$ of length $a$,
there exists $t>T$ such that all runners are in $A$ at time $t$.
\end{theorem}
\begin{proof}
  Let $\beta_1,\beta_2,\ldots,\beta_k$ be the starting points of the runners,
  where $0 \leq \beta_i < 1$, for $i=1,2,\ldots,k$.
  We proceed by induction on the number of runners $k$.
  The base case $k=1$ is satisfied by setting $v_1=1$ for any $a>0$.

 For the induction step, assume that the statement holds for $k-1$ runners,
 and we need to prove it for $k$ runners. Let an arc length length $a>0$ and
 $k$ starting positions $\beta_1,\ldots , \beta_k$ be given. By the induction
 hypothesis, for the arc length $a'=a/2$ and $k-1$ starting points $\beta_1,\ldots, \beta_{k-1}$
 there exist speeds $v_1,\ldots v_{k-1}$ so that for any $T\geq 0$ and any arc $A'\subset C$
 of length $a'=a/2$, all runners $1,2,\ldots,k-1$ are in $A'$ at some time $t>T$.

 Set $v_k =\frac{2}{a} v_{k-1}$. Consider an arbitrary arc
 $A=[\alpha,\alpha+a]\subset C$ of length $a$. Denote the first half of
 the arc by $A'=[\alpha,\alpha+a/2]$. At time $t$, runners $1,2,\ldots,k-1$ are in $A'$.
 Observe that runner $k$ will enter the arc $A$ at point $\alpha$ before any of the first $k-1$
 runners exits $A$ at point $\alpha+a$, regardless of his or her starting point.
 Hence all $k$ runners will be in $A$ at some time in the interval $[t,t+1/v_k]$,
 completing the induction step, and thereby the proof of the theorem.
\end{proof}

\noindent
The speeds of the runners in Theorems~\ref{thm:shade2} and~\ref{thm:shade3} can be chosen
as integers if desired, by setting $v_k =\lceil \frac{2}{a} v_{k-1}\rceil$.

\section{Algorithmic aspects and concluding remarks} \label{sec:algo}

It would be interesting to know if Theorem~\ref{thm:no-shade} can be strengthened
for runners starting at the same point. Question~\ref{q:original} then becomes:

\begin{question} \label{q:new}
Assume that $k$ runners $1,2,\ldots,k$, with constant but distinct speeds,
run clockwise along a circle of length $1$, starting at $0$.
Assume also that a certain circular arc is in the shade at all times. Does there always
exist a time when all runners are in the shade along the track?
\end{question}

It is worth pointing out a connection between runners in the shade and idle time
(as defined in Section~\ref{sec:intro}). Assume that $k$ runners $1,2,\ldots,k$, with
constant rationally independent (thus distinct) speeds $0< \xi_1,\xi_2,\ldots,\xi_k \leq 1$,
run clockwise along a circle of length $1$, starting from arbitrary points.
Further assume that $\sum_{i=1}^k \xi_i =S$, where $S \leq k$ is large, say, close to $k$.
A straightforward volume argument~\cite{CGKK11} yields the lower bound
$\tau \geq 1/\sum_{i=1}^{k} \xi_i =1/S \geq 1/k$ on the idle time. On the other hand,
by Theorem~\ref{thm:K}, for every circular arc $A\subset C$
and for every $T>0$, there exists $t>T$  such that
all runners are in $A$ at time $t$; pick an arbitrary interval $A$ of length $|A|=\eps$,
where $\eps$ is small. Since the maximum speed of the runners is at most $1$, the idle
time $\tau$ must be at least $|C \setminus A|=1-\eps$. The example shows that
the volume-based lower bound for the idle time can be very weak for large~$k$.

In view of this connection with idle time, it is unclear whether the following
basic question is decidable:

\begin{question} \label{prob:idle}
Given $v_1,\ldots,v_k>0$ and $\ell,\tau>0$, is there a schedule for
$k$ agents with these maximum speeds that ensures an idle time at most $\tau$
when patrolling a segment of length $\ell$?
\end{question}

\smallskip
The questions we have studied also suggest a few algorithmic questions for a circle $C$
of unit length that we list below.
Even if all runners start from the same point (say, $\beta_i=0$ for all $i=1,2,\ldots,k$),
it is a~priori unclear how to test whether some runner will be in the shade at all times
(or from some time later on), or all runners will be out of the shade infinitely often.
(In Questions~\ref{q:at-least1},~\ref{q:at-least2},~\ref{q:inverse1},~\ref{q:inverse2},
the circular arc $A$ is the complement of the shaded part.)

\begin{question} \label{q:all1}
Given $k$ runners with speeds $v_1,\ldots, v_k>0$ starting at $0$
and a circular arc $A \subset C$, decide whether there exists $t \geq 0$,
such that all $k$ runners are in $A$ at time $t$.
\end{question}

\begin{question} \label{q:all2}
Given $k$ runners with speeds $v_1,\ldots, v_k>0$ starting at $0$,
and an circular arc $A \subset C$,
decide whether for every $T \geq 0$ there exists $t \geq T$ such that
all $k$ runners are in $A$ at time $t$.
\end{question}

\begin{question} \label{q:at-least1}
Given $k$ runners with speeds $v_1,\ldots, v_k>0$ starting at $0$
and a circular arc $A \subset C$, decide whether there exists $T \geq 0$,
such that at every time $t\geq T$, at least one of the runners is in~$A$.
\end{question}

\begin{question} \label{q:at-least2}
Given $k$ runners with speeds $v_1,\ldots, v_k>0$ and a circular arc $A \subset C$,
decide whether there exist starting points $\beta_1,\ldots,\beta_k$ for the $k$ runners,
such that at every time $t\geq 0$, at least one of the runners is in~$A$.
\end{question}

The following two questions are in some sense the ``inverses'' of Question~\ref{q:original}.

\begin{question} \label{q:inverse1}
Given $k$ runners with speeds $v_1,\ldots, v_k>0$ starting from points
$\beta_1,\ldots , \beta_k\in C$, respectively, and an arc length $\ell>0$,
decide whether there exist a circular arc $A \subset C$ of length $\ell$
and a time $T\geq 0$ such that at every time $t\geq T$
at least one of the runners is in~$A$.
\end{question}

\begin{question} \label{q:inverse2}
Given $k$ runners with starting points $\beta_1,\ldots,\beta_k \in C$,
a circular arc $A \subset C$, and a parameter $v>0$,
decide whether there exist rational speeds $v_1,\ldots,v_k \in (0,v)$ and a time $T\geq 0$
such that at every time $t\geq T$ at least one of the runners is in~$A$.
\end{question}

\paragraph{Algorithms for periodic schedules.}
The challenge in Questions~\ref{q:all1}--\ref{q:inverse2} lies in dealing with
\emph{irrational} speeds. If all speeds are rational and fixed (not
just upper bounds), then the schedules of the runners are periodic,
where the period is the common denominator of the rational speeds.
Conversely, if a schedule with fixed speeds is periodic with period $q\in \mathbb{N}$
on the circle of unit length, then every speed can be written as
$v_i=\frac{r_i}{q}$ for some $r_i \in \NN$.

If we are given the speeds and starting points of $k$ agents, and the schedules are
periodic with period $q$, then we can compute the arrangement of the trajectories
of the agents in a time-position diagram $[0,q) \times [0,1)$.
Recall that the schedule of agent $i$ within a period $q$ is a function
    $f_i:[0,q) \rightarrow [0,1)$; its
    \emph{trajectory} is the graph $\{(t,f_i(t)): t\in [0,q)\}$.
If the speed of agent $i$ is $v_i=\frac{r_i}{q}$,
then its trajectory consists of at most $r_i+1$ line segments in this diagram.
The $k$ trajectories form an arrangement of at most $K=\sum_{i=1}^k (r_i+1)$ line segments,
and the complexity of the arrangement (the total number of vertices, edges, and faces)
is $O(K^2)$.
Questions~\ref{q:all1}--\ref{q:at-least1} and~\ref{q:inverse1},
in particular, can be answered based on this arrangement in time polynomial in $K$;
consequently these questions admit pseudopolynomial algorithms.
(Such an algorithm runs in time bounded by a polynomial in the unary complexity of the input;
see, \eg, \cite[p.~59]{WS11} for technical terms.)
As an example, we present algorithmic solutions for Questions~\ref{q:all1} and~\ref{q:all2}
(Theorem~\ref{thm:alg1}) and Question~\ref{q:inverse1} (Theorem~\ref{thm:alg2}).

\begin{theorem}\label{thm:alg1}
Given $k$ runners with rational (constant) distinct speeds $v_1,\ldots,v_k>0$
starting at $0$ and a circular arc $A \subset C$,
there exists an algorithm for deciding whether there exists $t \geq 0$,
such that all $k$ runners are in $A$ at time $t$.

Further, the same algorithm can decide whether for any $T \geq 0$
there exists $t \geq T$ such that all $k$ runners are in $A$ at time $t$.

Assuming that $v_i=\frac{r_i}{q}$, where $q \in \NN$, and $r_i \in \NN$ for $i=1,\ldots ,k$,
and $1 \leq r_1 < \ldots < r_k$, the algorithm runs in time $O(\sum_{i=1}^k i \, r_i) = O(k^2 \, r_k)$.
In particular, the running time is polynomial when $r_k$ is (bounded by a) polynomial in $k$.
\end{theorem}
\begin{proof}
Assume first, for the simplicity of exposition, that the speeds are integers:
$1 \leq v_1<\cdots<v_k$, where $v_i \in \NN$.
Since the speed of each runner is a multiple of the circle length, $\len(C)=1$,
the resulting schedule is periodic with period $1$
(as in the proof of Theorem~\ref{thm:no-shade}).
As such, it suffices to analyze what happens in the time interval $[0,1)$;
moreover, the answers to Questions~\ref{q:all1} and~\ref{q:all2} are the same.
Assume, as we may, that $A=[a,b]$, where $0<a<b<1$ and $a,b \in \QQ$
(since otherwise, $t=0$ and $t\in \NN$ are trivial solutions for
Questions~\ref{q:all1} and~\ref{q:all2}, respectively).

For $i=1,\ldots,k$, let $\J_i$ denote the set of time intervals in $[0,1)$
during which runner $i$ is in $I$; and let
$\K_i$ denote the set of time intervals in $[0,1)$ during which
runners $1$ through $i$ are \emph{all} in $I$.
The algorithm iteratively computes $\J_i$ and $\K_i$, for $i=1,\ldots,k$.
Observe that $\J_i$ consists of $v_i$ intervals of equal length: $|\J_i| = v_i$.
Since the start point of each interval in $\K_i$ can be uniquely associated with
the start point of one of the intervals in $\K_j$, for some $j \leq i$,
this implies that  $|\K_i| \leq \sum_{j=1}^i v_j$ for $i=1,\ldots,k$
(with equality for $i=1$); it is worth noting that the sharper inequality
$|\K_i| \leq v_i$ may not hold.

For Question~\ref{q:all1}, the algorithm outputs YES and returns some interval
(or all intervals) in $\K_k$, if $\K_k \neq \emptyset$, and NO otherwise.

We have $\J_1=\K_1$.
For every $i$, the intervals in $\J_i$ are sequentially computed from left to right:
$$ \J_i =\left\{ \frac{1}{v_i} \left(h+I\right) : h=0,\ldots,v_i-1\right\}. $$
For $i=2,\ldots,k$, the intervals in $\K_i$ are sequentially computed from left to right
in a merge-like process taking the intersection between the current interval in $\J_i$
and the current interval in $\K_{i-1}$.
The running time in step $i$ is $O(\sum_{j=1}^i v_j) = O(i \, v_i)$; consequently,
the overall running time is
$O(\sum_{i=1}^k \sum_{j=1}^i v_j ) = O(\sum_{i=1}^k i \, v_i)= O(k^2 \, v_k)$, as required.

Consider now the general case with rational speeds $v_i=\frac{r_i}{q}$, where
$q \in \NN$ and $1 \leq r_1 < \cdots < r_k$ are natural numbers. Let $q$ be
the minimal denominator with this property, and so $\gcd(r_i,q)=1$ for at least one index $i$.
Since each speed is a multiple of $1/q$, the resulting schedule is periodic with period $q$;
and so it suffices to analyze what happens in the time-interval $[0,q)$.
We have $\J_1=\K_1$.
For every $i$, the intervals in $\J_i$ are sequentially computed from left to right:
$$ \J_i =\left\{ \frac{q}{r_i} \left(h+I\right) : h=0,\ldots,r_i -1 \right\}. $$
For $i=2,\ldots,k$, the intervals in $\K_i$ are computed in a merge-like process
(similarly to the case of integral speeds).
We have $|\J_i| = r_i$ and  $|\K_i| \leq \sum_{j=1}^i r_j$ for $i=1,\ldots,k$
(with equality for $i=1$).
The running time in step $i$ is $O(\sum_{j=1}^i r_j) = O(i \, r_i)$; consequently,
the overall running time is
$O(\sum_{i=1}^k \sum_{j=1}^i r_j ) = O(\sum_{i=1}^k i \, r_i)= O(k^2 \, r_k)$, as claimed.
\end{proof}

\begin{theorem}\label{thm:alg2}
Given $k$ runners with rational (constant) speeds $v_1,\ldots, v_k>0$ starting
from points $\beta_1,\ldots , \beta_k\in C$, respectively, and an arc length $\ell>0$,
there exists an algorithm for deciding whether there exist a circular arc
$A \subset C$ of length $\ell$ and a time $T\geq 0$ such that at every time
$t\geq T$ at least one of the runners is in $A$.

Assuming that $v_i=\frac{r_i}{q}$, where $q \in \NN$, and $r_i \in \NN$ for $i=1,\ldots,n$,
and $1 \leq r_1 < \ldots < r_k$, the algorithm runs in time $O(k^2 \, r_k^2)$.
In particular, the running time is polynomial when $r_k$ is (bounded by a) polynomial in $k$.
\end{theorem}
\begin{proof}
As noted above, the schedule is periodic with period $q$, and the trajectory of agent $i$
consists of at most $r_i+1$ line segments in the time-position diagram $[0,q) \times [0,1)$.
The first segment starts from $(0,\beta_i)$, the last segment ends at $(q,\beta_i)$,
and all other start and endpoints of the segments are on the vertical lines $t=0$ and $t=q$,
respectively. For a position $x \in [0,1)$, let $A(x)\subset C$ (resp., $B(x)\subset C$)
denote a circular arc of length $\ell$ whose clockwise first (resp., last) endpoint is at $x$.
Runner $i$ is in arc $A(x)$ at time $t$ if and only if $x \in B(f_i(t))$.
For runner $i$, denote the set of pairs $(t,x)$ such that $x \in B(f_i(t))$ by
$D_i=\{ (t,x): x\in B(f_i(t))\}$. 
Note that $D_i$ consists of at most $r_i$ connected components: it lies between the 
graphs of the functions $f(t)$ and $f_i(t)-\ell\mbox{ \rm mod } 1$
for $t\in [0,q)$, that is, $D_i$ is bounded by $O(r_i)$ line segments;
see Fig.~\ref{fig:diagram} for an example.

\begin{figure}[htbp]
\begin{center}
\includegraphics[width=0.9\textwidth]{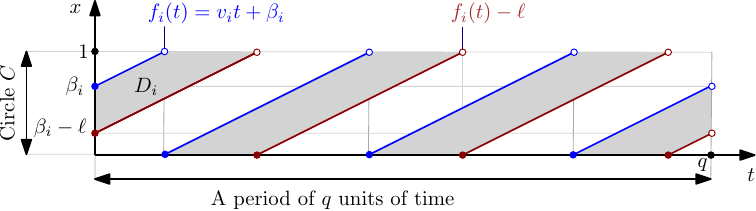}
\end{center}
\caption{The trajectory of agent $i$ over the interval $[0,q)$, where $f_i(t)=v_it+\beta_i$ 
and $v_i=r_i/q$. The shaded region $D_i$ is bounded by the trajectory of agent $i$,
and its vertical translate by $\ell$.}
\label{fig:diagram}
\end{figure}

Define the union $D=\bigcup_{i=1}^k D_i$. A desired circular arc $A\subset C$
exists if and only if $D$ contains the horizontal segment $[0,q) \times \{x\}$
for some $x\in [0,1)$. Since the arrangement of the boundaries of the regions $D_1,\ldots , D_k$
consists of at most $K :=\sum_{i=1}^k(r_i+1) = O(kr_k)$ segments, the full arrangement as well as
$D$ can be computed in $O(K^2) = O(k^2r_k^2)$ time~\cite[Section~8.3]{Dutch}.
By sweeping the arrangement by a horizontal line, it can be determined in $O(k^2r_k^2)$ time
whether $D$ contains a horizontal segment $[0,q] \times \{x\}$.
\end{proof}

Designing algorithms for Questions~\ref{q:all1}--\ref{q:inverse2} that run
in polynomial time, \ie, in time polynomial in the bit complexity of the
input, $O(\log{q} + \sum_{i=1}^k \log{r_i})$, is left open.
The existence of polynomial-time algorithms in the real RAM model
(for arbitrary real input) for Questions~\ref{q:all1}--\ref{q:inverse2}
remains open, as well.

\end{document}